\newcommand{\supp}{\mathop{\rm supp}\nolimits}
\newcommand{\Diam}{\mathop{\rm Diam}\nolimits}
\newcommand{\Leb}{\mathop{\rm Leb}\nolimits}
\newcommand{\A}{\mathbb{A}}
\newcommand{\R}{\mathbb{R}}
\newcommand{\N}{\mathbb{N}}
\newcommand{\Z}{\mathbb{Z}}
\newcommand{\p}{\mathbb{P}}
\newcommand{\mM}{\mathcal{M}}
\newcommand{\dd}{\, \mbox{d}}
\newcommand{\mgr}{\mu}
\newcommand{\msp}{\nu}
\newcommand{\eps}{\varepsilon}
\newcommand{\1}{\mathds{1}}
\newcommand{\emr}[1]{\textcolor{black}{#1}}
\newtheorem{thm}{Theorem}[section]
\newtheorem{lm}[thm]{Lemma}
\newtheorem{prop}[thm]{Proposition}
\theoremstyle{definition}
\newtheorem{defn}[thm]{Definition}
\theoremstyle{remark}
\newtheorem{rem}[thm]{Remark}
\numberwithin{equation}{section}
\begin{document}

\title[Ergodic theorem for nonstationary random walks]{Ergodic Theorem for nonstationary random walks on compact abelian groups}

\author{Grigorii Monakov}

\address{Grigorii Monakov, Department of Mathematics, University of California, Irvine, CA~92697, USA}

\email{gmonakov@uci.edu}

\subjclass[2010]{Primary: 37H05, 37B05, 60J05.}

\date{}

%\dedicatory{}

%    "Communicated by" -- provide editor's name; required.
\commby{}

\begin{abstract}
    We consider a nonstationary random walk on a compact metrizable abelian group. Under a classical strict aperiodicity assumption we establish a weak-* convergence to the Haar measure, Ergodic Theorem and Large Deviation Type Estimate.
\end{abstract}

\maketitle

\section{Introduction}

\subsection{Main result}

Consider a topological compact metrizable abelian group \\ $(\A, 0, +)$ with shift-invariant metric\footnote{It is worth noticing that the statement that all compact separable abelian groups of real power are metrizable can be neither proved nor disproved in ZFC \cite{H}. However, it is well-known (see \cite{Bi} and \cite{K}) that any topological group is metrizable if and only if it is first countable, and, moreover, the metric can always be taken to be left-invariant.} $d$:
\begin{equation*}
    d(x + \alpha, y + \alpha) = d(x, y) \quad \forall \alpha, x, y \in \A.
\end{equation*}
Let $\mathcal{B}$ denote the Borel $\sigma$-algebra. It is well-known \cite[Chapter XI]{Ha} that there exists a unique shift invariant Borel probability measure on $\A$ that is usually referred to as Haar measure. Let us denote this measure by $h$.

Denote the space of all probability measures on $(\A, \mathcal{B})$ by $\mM$. We define convolution of two measures $\mgr, \msp \in \mM$ by the following formula:
\begin{equation} \label{convAbelian}
    (\mgr * \msp) (B) = \int_{\A} \1_{B} (\alpha + \beta) \dd\mgr(\beta) \dd\msp(\alpha) \quad \forall B \in \mathcal{B}.
\end{equation}
One can view the measure $\mgr * \msp$ as a distribution of the image of a random point $\alpha \in \A$, distributed with respect to $\msp$, shifted by an independent random element $\beta \in \A$, distributed with respect to $\mgr$.

We will need the following \textbf{strict aperiodicity} condition (see, for instance, \cite{Bo1}):
\begin{defn} \label{def:transit}
    We say that a random shift defined by a measure $\mgr$ is \textbf{strictly aperiodic} if the smallest semigroup containing $\{ \alpha - \beta \; | \; \alpha, \beta \in \supp(\mgr)\}$ is dense in $\A$ (we denote by $\supp(\mgr)$ the topological support of the measure $\mgr$, i. e. the smallest closed subset of a full measure).
\end{defn}
It is not hard to see that the condition above is equivalent to $\supp(\mgr)$ not being contained in a coset of any proper closed (normal) subgroup of $\A$.

The main result of this paper can be stated as follows:

\begin{thm} \label{thm:mainGlobal}
    Consider a compact (in weak-* topology) $K \subset \mM$, such that any $\mgr \in K$ is \textbf{strictly aperiodic}. Let $\mgr_1, \mgr_2, \mgr_3, \ldots$ be a sequence of measures in $K$.

    Then for any $\varphi\in C(\A, \mathbb{R})$ and any $x \in \A$ almost surely
    \begin{equation} \label{isomErg}
        \frac{1}{n} \sum_{k = 0}^{n - 1} \varphi(x + \alpha_1 + \alpha_2 + \ldots + \alpha_k) \to \int_{\A} \varphi \dd h \ \ \text{\rm as}\ \ \ n\to \infty,
    \end{equation}
    where the elements $\alpha_1, \alpha_2, \ldots$ are independent random elements in $\A$ and $\alpha_j$ is distributed with respect to $\mgr_j$.
    %where $\nu_1=\delta_x$, and $\nu_{n+1}=\mu_n*\nu_n$ for all $n\ge 1$.
    %where the measures $\msp_n$ are defined by~\eqref{eq:nu-def}.

    Moreover, an analogue of the Large Deviations Theorem holds. Namely, for any $\eps > 0$ there exist $C, \delta > 0$ such that for any $x \in \A$ and any $n \ge 1$
    \begin{equation} \label{isomLD}
        %\forall n \in \mathbb{N}, \quad
        \p \left( \left| \frac{1}{n} \sum_{k = 0}^{n - 1} \varphi(x + \alpha_1 + \alpha_2 + \ldots + \alpha_k) - \int_{\A} \varphi \dd h \right| > \eps \right) < C \exp(-\delta n),
    \end{equation}
    where
    \begin{equation*}
        \p = \prod_{n=1}^\infty \mgr_n.
    \end{equation*}
\end{thm}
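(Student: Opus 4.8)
My plan is to transfer the problem to the Fourier (character) side, reduce both statements to a single nontrivial character, and there replace any moment computation by a martingale concentration argument. Since $\A$ is compact abelian, its continuous characters $\chi\colon\A\to\{z\in\mathbb C:|z|=1\}$ form a conjugation-closed, point-separating, unital family, so by Stone--Weierstrass their linear span is dense in $C(\A,\mathbb C)$; the trivial character $\chi_0\equiv1$ integrates to $1$ against $h$ and every nontrivial $\chi$ integrates to $0$. Given $\varphi$ and $\eps$ I would fix a trigonometric polynomial $\psi=\sum_{\chi\in F}c_\chi\chi$ with $F$ finite and $\|\varphi-\psi\|_\infty<\eps/3$. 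Then $\int_\A\psi\dd h=c_{\chi_0}$, and since $|\frac1n\sum_k\varphi(x+S_k)-\frac1n\sum_k\psi(x+S_k)|<\eps/3$ together with $|\int\varphi\dd h-\int\psi\dd h|<\eps/3$, the event in \eqref{isomLD} is contained in the analogous event for $\psi$ at level $\eps/3$. Writing $\psi-\int\psi\dd h=\sum_{\chi\in F,\,\chi\neq\chi_0}c_\chi\chi$, a union bound over the finitely many nontrivial $\chi\in F$ reduces everything to proving, for each fixed nontrivial $\chi$, a bound $\p(|T_n|>\eta n)\le Ce^{-\delta n}$ uniformly in $x$ and $n$, where $T_n:=\sum_{k=0}^{n-1}\chi(\alpha_1+\dots+\alpha_k)$ (the prefactor $\chi(x)$ has modulus $1$ and drops out).

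Next I would establish a uniform spectral gap. For nontrivial $\chi$ and $\mgr\in K$, $|\widehat\mgr(\chi)|=\bigl|\int_\A\chi\dd\mgr\bigr|\le1$ with equality iff $\chi$ is $\mgr$-almost everywhere constant, i.e.\ $\supp(\mgr)$ lies in a coset of the proper closed subgroup $\ker\chi$; this is excluded by strict aperiodicity (the remark after Definition~\ref{def:transit}). Hence $|\widehat\mgr(\chi)|<1$ for every $\mgr\in K$, and since $\mgr\mapsto\widehat\mgr(\chi)$ is weak-$*$ continuous and $K$ is compact, $\rho:=\sup_{\mgr\in K}|\widehat\mgr(\chi)|<1$; in particular $|\widehat{\mgr_j}(\chi)|\le\rho$ for all $j$. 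I expect the main obstacle to lie exactly here: attempting to bound $\E|T_n|^{2p}$ directly forces in the Fourier coefficients of all powers $\chi^m$, for which no uniform gap below $1$ need hold (there are strictly aperiodic measures with $\limsup_m|\widehat\mgr(\chi^m)|=1$), so any naive moment method stalls. The resolution is to linearize and use only the first-order gap $\rho<1$.

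Writing $Z_k:=\chi(\alpha_1+\dots+\alpha_k)$ and $\mathcal F_k:=\sigma(\alpha_1,\dots,\alpha_k)$, one has $Z_0=1$ and the AR(1)-type recursion $Z_k=\widehat{\mgr_k}(\chi)\,Z_{k-1}+D_k$, where the increments $D_k:=Z_k-\widehat{\mgr_k}(\chi)Z_{k-1}$ satisfy $\E[D_k\mid\mathcal F_{k-1}]=0$ and $|D_k|\le1+\rho\le2$. Solving the recursion, summing over $k$, and interchanging the order of summation, I would obtain
\[
T_n=b_n+\sum_{i=1}^{n-1}g_i D_i,\qquad b_n:=\sum_{k=0}^{n-1}\prod_{j=1}^{k}\widehat{\mgr_j}(\chi),\qquad g_i:=\sum_{k=i}^{n-1}\prod_{j=i+1}^{k}\widehat{\mgr_j}(\chi),
\]
where $b_n$ and all $g_i$ are deterministic with $|b_n|\le(1-\rho)^{-1}$ and $|g_i|\le(1-\rho)^{-1}$. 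The decisive point is that $\sum_i g_iD_i$ is a sum of martingale differences bounded by $2(1-\rho)^{-1}$.

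Finally I would apply the Azuma--Hoeffding inequality to the real and imaginary parts of $\sum_i g_iD_i$ (each a real martingale with increments bounded by $2(1-\rho)^{-1}$) and absorb the bounded term $b_n$, giving $\p(|T_n|>\eta n)\le4\exp(-c\,\eta^2 n)$ for all $n\ge1$, uniformly in $x$; rescaling $\eta$ through the reduction of the first paragraph yields \eqref{isomLD}. The almost sure statement \eqref{isomErg} then follows as a corollary: for each fixed $\varphi$ and each $m\in\N$, applying \eqref{isomLD} with $\eps=1/m$ gives a summable-in-$n$ probability bound, so Borel--Cantelli forces $\limsup_n|\frac1n\sum_k\varphi(x+S_k)-\int\varphi\dd h|\le1/m$ almost surely, and intersecting over $m$ gives the convergence. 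Thus the whole difficulty is concentrated in upgrading the easy $O(1/\sqrt n)$ second-moment decay to an exponential rate, which the martingale linearization achieves using only the single gap $\rho<1$.
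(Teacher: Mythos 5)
Your proposal is correct, and it follows a genuinely different route from the paper. The paper never touches Fourier analysis: it proves a uniform Wasserstein convergence of the laws $\mgr_m*\cdots*\mgr_1*\msp$ to the Haar measure (Theorem \ref{thm:mainRes}) via a support-density and coupling argument (the decomposition over $\eps$-wide sets together with the monotonicity lemma $W(\mgr*\msp,h)\le W(\msp,h)$), and then feeds this, as the Standing Assumption, into the Gorodetski--Kleptsyn nonstationary ergodic theorem (Theorem \ref{thm:nonstErg}), used as a black box, to obtain both \eqref{isomErg} and \eqref{isomLD}. You instead work on the dual side: Stone--Weierstrass reduces everything to finitely many nontrivial characters; strict aperiodicity plus weak-* compactness of $K$ gives the uniform gap $\rho=\sup_{\mgr\in K}|\widehat\mgr(\chi)|<1$ (this is exactly where the equivalence stated after Definition \ref{def:transit} enters, since $|\widehat\mgr(\chi)|=1$ forces $\supp(\mgr)$ into a coset of $\ker\chi$); and your key move --- the recursion $Z_k=\widehat{\mgr_k}(\chi)Z_{k-1}+D_k$ followed by Azuma--Hoeffding applied to $\sum_i g_iD_i$ --- correctly sidesteps the genuine obstruction you identify, namely that no uniform gap holds for the powers $\chi^m$, so naive moment bounds on $|T_n|^{2p}$ would fail. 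What each approach buys: yours is self-contained (no external nonstationary ergodic theorem), gives explicit constants $\delta\asymp\eps^2$ in \eqref{isomLD} (comparable in form to the Cai--Duarte--Klein estimate the paper cites), and in effect shows that character-theoretic methods do reach the uniform nonstationary setting, a point the paper explicitly leaves open; the paper's geometric scheme, on the other hand, produces the independently interesting intermediate statement of Theorem \ref{thm:mainRes} and is the formulation its author hopes to extend to noncommutative groups, where your argument (being built on characters) has no direct analogue. One routine detail to patch in your write-up: the bound $4\exp(-c\eta^2 n)$ only follows after absorbing $b_n$, which requires roughly $\eta n\ge 2/(1-\rho)$; for the finitely many smaller $n$ you should enlarge $C$ so that the claimed bound exceeds $1$, which the freedom in choosing $C,\delta$ permits.
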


Some classical examples of abelian groups to which our results can be applied include flat tori, $p$-adic integers, and Cantor dyadic group. Additional motivation for studying stationary and nonstationary random shifts on tori comes from the recent interest to dynamics of mixed random-quasiperiodic cocycles, see \cite{CDK1, CDK2}.

\begin{rem}
    The proof presented in this paper relies heavily on the commutativity of $\A$. Nevertheless, we expect a similar result to hold in a noncommutative case as well and plan to study it in future publications.
\end{rem}

\subsection{Random and nonstationary ergodic theorems}

Ergodic Theorems and Large Deviation Type Estimates have been intensively studied for both stationary and nonstationary stochastic dynamical systems. Let us mention some classical and recent results in that area.

Let $X$ be a compact metric space and $\mgr$ a Borel probability measure on the space of continuous functions $C(X, X)$. For any Borel probability measure $\msp$ on $X$ we can define its convolution with $\mgr$ by
\begin{equation} \label{convGeneral}
    (\mgr * \msp) (A) = \int_{C(X, X)} \msp(f^{-1}(A)) \dd\mgr(f)
\end{equation}
for any Borel set $A \subset X$. The resulting measure $\mgr * \msp$ can be understood in the following way: it is the distribution of $f(x)$, where $x \in X$ is distributed with respect to measure $\msp$, the map $f \in C(X, X)$ is distributed with respect to the measure $\mgr$ and $f$ and $x$ are chosen independently from one another. Notice that formula (\ref{convAbelian}) is a special case of (\ref{convGeneral}).

A probability measure $\msp$ on $X$ is called stationary if $\mgr * \msp = \msp$. A stationary measure always exists. For instance, one can observe that any limit point of the sequence
\begin{equation*}
    \frac{1}{n} \sum_{j = 0}^{n - 1} \mgr^{*j} * \msp
\end{equation*}
will be a stationary measure (for details see \cite[Lemma 2.10]{BQ}). Now the Birkhoff Ergodic Theorem for stationary stochastic dynamical systems can be formulated as follows \cite[Theorem 5.2.9]{DMPS}:
\begin{thm}
    Let $\msp$ be any $\mgr$-stationary ergodic (extremal point of the set of $\mgr$-stationary measures) measure on $X$ and $\varphi \in C(X, \R)$ be a continuous function. Then for $\msp$-almost every point $x \in X$ and $\mgr^{\N}$-almost every sequence $f_1, f_2, \ldots \in C(X, X)$ one has
    \begin{equation*}
        \frac{1}{n} \sum_{k = 0}^{n - 1} \varphi(f_{k} \circ f_{k - 1} \circ \ldots \circ f_1 (x)) \to_{n \to \infty} \int_{X} \varphi(x) \dd\msp(x).
    \end{equation*}
\end{thm}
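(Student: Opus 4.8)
The plan is to deduce the statement from the classical Birkhoff pointwise ergodic theorem applied to the one-sided skew product associated with the random dynamical system. Put $\Omega = C(X,X)^{\N}$ with the product measure $\mgr^{\N}$ and the left shift $\sigma$, and define $T \colon \Omega \times X \to \Omega \times X$ by $T(\omega, x) = (\sigma\omega, f_1(x))$, where $f_1$ denotes the first coordinate of $\omega = (f_1, f_2, \ldots)$; the evaluation map is measurable, so $T$ is measurable. Iterating, $T^k(\omega, x) = (\sigma^k\omega, f_k \circ \cdots \circ f_1(x))$, so for $\Phi(\omega, x) := \varphi(x)$ the Birkhoff average $\frac1n \sum_{k=0}^{n-1} \Phi(T^k(\omega,x))$ is exactly the average appearing in the statement, and $\Phi$ is bounded, hence integrable.

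First I would verify that $m := \mgr^{\N} \otimes \msp$ is $T$-invariant, which is where stationarity enters. Pushing $m$ forward by $T$ and integrating out the first coordinate $f_1$ --- independent of the remaining coordinates $\sigma\omega$ --- the inner integral over $f_1$ and $x$ equals $\int (f_1)_*\msp \dd\mgr(f_1) = \mgr * \msp$ in the sense of (\ref{convGeneral}); stationarity gives $\mgr * \msp = \msp$, and $\sigma$-invariance of $\mgr^{\N}$ completes the verification that $T_* m = m$.

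The heart of the argument, and the step I expect to be the main obstacle, is the equivalence between ergodicity of $\msp$ as a stationary measure and ergodicity of $m$ for $T$; I only need the implication I would now sketch. Let $g \in L^\infty(m)$ be $T$-invariant and set $\phi(x) := \int_\Omega g(\omega, x) \dd\mgr^{\N}(\omega)$. Using the invariance identity $g(\omega,x) = g(\sigma\omega, f_1(x))$ together with the independence of $f_1$ from $\sigma\omega$, a short computation shows $P\phi = \phi$, where $P\psi(x) := \int \psi(f(x)) \dd\mgr(f)$ is the Markov operator of the walk. Since extremality (ergodicity) of $\msp$ among stationary measures is equivalent to every bounded $P$-harmonic function being $\msp$-almost everywhere constant, we get $\phi \equiv \int g \dd m$. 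To upgrade this to constancy of $g$ itself, I would condition on $\mathcal{G}_n := \sigma(f_1, \ldots, f_n) \vee \mathcal{B}_X$: since $g = g \circ T^n$ and $\sigma^n\omega$ is independent of $\mathcal{G}_n$ with law $\mgr^{\N}$, one obtains $\E[g \mid \mathcal{G}_n] = \phi(f_n \circ \cdots \circ f_1(x)) = \int g \dd m$ for every $n$; as $\mathcal{G}_n$ increases to the full $\sigma$-algebra, forward martingale convergence forces $g \equiv \int g \dd m$, so $m$ is ergodic. This martingale step, which converts the statement about $P$-harmonic functions into one about genuinely $T$-invariant functions, is the delicate point.

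Finally, with $m$ ergodic the Birkhoff theorem for $(T, m)$ gives, for $m$-almost every $(\omega, x)$, convergence of $\frac1n \sum_{k=0}^{n-1} \Phi(T^k(\omega,x))$ to $\int \Phi \dd m = \int_X \varphi \dd\msp$. By Fubini applied to the product $m = \mgr^{\N} \otimes \msp$, the full-measure set of pairs $(\omega, x)$ yields the asserted form: for $\msp$-almost every $x$, convergence holds for $\mgr^{\N}$-almost every sequence $(f_1, f_2, \ldots)$.
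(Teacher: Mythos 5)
Your proof is correct, but there is nothing in the paper to compare it against: this theorem is quoted as a known classical result, with a citation to \cite[Theorem 5.2.9]{DMPS}, and no proof of it appears (or is needed) in the body of the paper, whose own contribution starts with Theorem \ref{thm:mainRes}. Your argument is the standard skew-product route, essentially Kakutani's random ergodic theorem: stationarity of $\msp$ is exactly $T$-invariance of $m=\mgr^{\N}\otimes\msp$; ergodicity of $m$ is reduced, via the conditioning/L\'evy-upward-martingale argument, to the triviality of bounded $P$-harmonic functions, which is the standard operational form of extremality; and Birkhoff plus Fubini finish. The steps you flag as delicate are handled correctly --- in particular, in the identity $\E[g\mid\mathcal{G}_n]=\phi(f_n\circ\cdots\circ f_1(x))=\int g\dd m$ the second equality silently uses stationarity once more (the law of $f_n\circ\cdots\circ f_1(x)$ under $m$ is $\msp$, so the $\msp$-a.e.\ constancy of $\phi$ applies), which you should make explicit. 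By contrast, the cited reference \cite{DMPS} obtains the result inside Markov-chain theory: one passes to the canonical chain on the path space $X^{\N}$ with initial law $\msp$, shows extremality makes the shift-invariant path measure ergodic, and applies the ergodic theorem for stationary sequences there. The two routes are close cousins; your skew product keeps the noise $(f_1,f_2,\ldots)$ explicit, which is precisely what lets you state the conclusion for $\mgr^{\N}$-a.e.\ sequence of maps directly, whereas the path-space formulation requires transferring the almost-sure statement about trajectories back to sequences of maps at the end.
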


Large Deviation Type Estimates are also well studied for the stationary case. For that setting it turns out to be useful to operate in terms of Markov processes instead of stochastic dynamical systems. Some of the results can be found in \cite{V, A, DK} and references therein. In their recent paper Cai, Duarte and Klein obtained an abstract Large Deviations Theorem (see Theorem 1.1 and Remark 2.3 in \cite{CDK3}). One application of this result is a Large Deviation Theorem for random translations on the torus (see Theorem 1.2). Although this result is similar to our Theorem \ref{thm:mainGlobal}, it provides a more accurate estimate for the probability of large deviations in a more restrictive setting and uses completely different methods. \emr{In classical Probability Theory Large Deviation Type Estimates are also known for nonstationary sequences of independent random variables. This fact is often referred to as G\"artner-Ellis theorem (see, for example \cite{DZ}).}

For the nonstationary deterministic setting some results are also known (sometimes such systems are referred to as {\it sequential}). For instance, in \cite{BB} authors deal with the situation when all transformations have a common invariant measure, in \cite{CR} authors consider nonstationary sequences of piecewise expanding transformations of $[0, 1]$, \emr{and in \cite{GBK} authors generalize Krylov-Bogolyubov theorem and other classical results from ergodic theory of smooth dynamical systems for a non autonomous (non stationary) dynamical system, defined by a sequence of uniformly convergent maps.}

Ergodic Theorem for random \emr{nonstationary} systems has also been studied from the point of view of Markov processes (and is referred to as strong law of large numbers). For various results on that topic see \cite{G, LL1, LL2, LY, Y1, Y2, ZYW}. Large Deviation Type Estimates for \emr{inhomogeneous} Markov chains can be found in \cite{DS}. A recent monograph by Dolgopyat and Sarig \cite{DSa} investigates various limit theorems including Large Deviations Theorem for nonstationary Markov processes \emr{(see also references therein)}. Finally, an abstract version of ergodic theorem that we use as a black box to establish our result (see Theorem \ref{thm:nonstErg} below) was recently obtained by Gorodetski and Kleptsyn in \cite{GK}.

\subsection{Random walks on groups}

\emr{The literature concerning random walks on groups is vast and goes far beyond the scope of our paper, for example, one can see surveys \cite{BQ, BL, Fu} and references therein. In this section we will concentrate on random walks on compact groups only.} Limit theorems for this case have been studied intensively since 1940s. For a detailed survey of results about random walks on flat tori see \cite{DF}. Generalizing results of L\'evy \cite{L} for a random walk on $\R/\Z$ several convergence results for the sequence $\mgr^{*n}$ were obtained in pioneering works of Kawada and It\^o \cite{KI}, Kawada \cite{Ka}, Pr\'ekopa, R\'enyi and Urbanik \cite{PRU} and Urbanik \cite{U}. In the work \cite{Kl} by Kloss one will find a uniform stationary version of Ergodic Theorem and first results on the weak-* convergence of a nonstationary sequence $\mgr_{n} * \mgr_{n - 1} * \ldots * \mgr_{1}$. In \cite{S} Stromberg proves the most general result about weak-* convergence in the stationary case (see Remark \ref{rem:statWeakConv} below). In \cite{BE} it is proved (in noncommutative case) that Ergodic Theorem for a stationary random walk is equivalent to the smallest closed subgroup containing $\supp(\mgr)$ being equal to the whole group (such measures $\mgr$ are usually referred to as {\it adapted}). Results about convergence of $\mgr^{*n}$ in the sense of total variation can be found in \cite{B, RX, AG}. Finally, Borda in the recent works \cite{Bo1, Bo2} established the Central Limit Theorem and Law of Iterated Logarithm for the stationary random walks on groups. The works mentioned above also include several nonstationary forms of the ergodic theorem. However, all those theorems impose some additional Doeblin-type conditions on measures $\mgr_1, \mgr_2, \ldots, \mgr_n$, which is not required for our approach.

Our proof of Theorem \ref{thm:mainGlobal} consists of two independent parts: we prove that under suitable conditions the sequence of measures $\mgr_n * \mgr_{n - 1} * \ldots * \mgr_1$ converges (uniformly with respect to the choice of $\mgr_1, \mgr_2, \ldots, \mgr_n$) to $h$ in weak-* topology (see Theorem \ref{thm:mainRes}), and then apply a recent result by Gorodetski and Kleptsyn (see Theorem \ref{thm:nonstErg}) to deduce the ergodic theorem. Even though the weak-* convergence in Theorem \ref{thm:mainRes} is similar in spirit to the classical theorem by Kawada and It\^o (see \cite[Theorem 8]{KI}), our proof uses a completely different technique. In their work Kawada and It\^o used the machinery of representation theory to prove weak-* convergence in the stationary case. However, we don't know if these methods can be pushed further to establish a uniform nonstationary result like the one in Theorem \ref{thm:mainRes}.

\section{Proof of the main result}

Recently, A. Gorodetski and V. Kleptsyn proved a nonstationary analog of the stochastic version of Birkhoff Ergodic Theorem \cite{GK}. \emr{In order to state it we would first need to recall the definition of the Wasserstein metric. Let $X$ be a compact metric space and $\mM$ be the space of Borel probability measures on $X$. Then the Wasserstein metric can be defined as follows:}

\begin{defn} \label{def:Wass}
    Let $\msp_1, \msp_2$ be two probability measures in $\mM$. Then the \emph{Wasserstein distance} between them is defined as
    \begin{equation*}
        W(\msp_1, \msp_2) = \inf_{\gamma} \iint_{\emr{X \times X}} d(x, y) \dd \gamma(x, y),
    \end{equation*}
    where the infimum is taken over all probability measures $\gamma$ on $(\emr{X \times X}, \sigma( \mathcal{B} \times \mathcal{B} ))$ with the marginals (projections on the $x$ and $y$ coordinates) $P_x(\gamma) = \msp_1$ and $P_y(\gamma) = \msp_2$.
\end{defn}

\begin{rem}
    It is well known (see, for example \cite[Theorem 6.9]{Vi}) that convergence in Wasserstein metric is equivalent to the one in weak-* topology.
\end{rem}

\emr{
\begin{rem} \label{rem:conv}
    Another important property of Wasserstein metric that we will use is the following convexity:
    \begin{equation*}
        W(t \msp_1 + (1 - t) \msp_2, \msp_3) \le t W(\msp_1, \msp_3) + (1 - t) W (\msp_2, \msp_3) \quad \forall \msp_1, \msp_2, \msp_3 \in \mM, \; \forall t \in (0, 1).
    \end{equation*}
    This statement follows directly from Definition \ref{def:Wass}.
\end{rem}
}

Let us also introduce the following

\vspace{5pt}
{\bf Standing Assumption:} {\it \emr{Let $X$ be a compact metric space.} We will say that a sequence of distributions
$\mgr_1, \mgr_2, \mgr_3, \ldots$ on $C(X, X)$ satisfies the Standing Assumption if for any $\delta>0$ there exists $m\in \mathbb{N}$ such that the images of any two initial measures after averaging over $m$ random steps after any initial moment $n$ become $\delta$-close to each other in terms of Wasserstein distance:
\begin{equation*}
    W (\mgr_{n+m}*\ldots *\mgr_{n+1} *\msp, \mgr_{n+m}*\ldots *\mgr_{n+1} *\msp') < \delta
\end{equation*}
for any probability measures $\msp$ and $\msp'$ on $X$ and any $n \in \N$.
}
\vspace{5pt}

Now the nonstationary stochastic version of Birkhoff Ergodic Theorem can be formulated as follows :
\begin{thm}[{\cite[Theorem 3.1]{GK}}]\label{thm:nonstErg}
    \emr{Let $X$ be a compact metric space.} Suppose the sequence of distributions $\mgr_1, \mgr_2, \mgr_3, \ldots$ satisfies the \textbf{Standing Assumption} above. Given any Borel probability measure $\msp_0$ on $X$, define
    \begin{equation*}
        \msp_{n}:=\mgr_n*\msp_{n-1}, \quad n=1,2,\dots.
    \end{equation*}

    Then for any $\varphi\in C(X, \mathbb{R})$ and any $x\in X$, almost surely
    \begin{equation} \label{nonstErg}
        \frac{1}{n}\left|\sum_{k = 0}^{n - 1} \varphi(f_k\circ \ldots \circ f_1(x))-\sum_{k = 0}^{n - 1} \int_X \varphi \dd\msp_k \right|\to 0 \ \ \text{\rm as}\ \ \ n\to \infty.
    \end{equation}

    Moreover, an analogue of the Large Deviations Theorem holds. Namely, for any $\eps > 0$ there exist $C, \delta > 0$ such that for any $x \in X$ and any $n \ge 1$
    \begin{equation} \label{nonstLD}
        \p \left( \frac{1}{n}\left|\sum_{k = 0}^{n - 1} \varphi(f_k\circ \ldots \circ f_1(x))-\sum_{k = 0}^{n - 1} \int_X \varphi \dd\nu_k \right| > \eps \right) < C \exp(-\delta n),
    \end{equation}
    where
    \begin{equation*}
        \p = \prod_{n=1}^\infty \mgr_n.
    \end{equation*}
\end{thm}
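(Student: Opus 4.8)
The plan is to reduce the whole statement to a single large-deviation bound for a sum of bounded, weakly dependent random variables, and then to extract the almost sure convergence \eqref{nonstErg} from \eqref{nonstLD} by Borel--Cantelli. Throughout I write $Y_k = f_k\circ\cdots\circ f_1(x)$ (with $Y_0=x$) and introduce the Markov operators $P_n\psi(y)=\int_{C(X,X)}\psi(f(y))\dd\mgr_n(f)$, so that $\E[\psi(Y_{k})\mid f_1,\dots,f_j]=(P_{j+1}\cdots P_k\psi)(Y_j)$ for $k\ge j$. The first step is to rewrite the Standing Assumption in operator form: by Kantorovich--Rubinstein duality and by testing the Wasserstein bound on pairs of Dirac masses, the assumption is equivalent to the statement that $\beta(g):=\sup_n\sup_{\mathrm{Lip}(\psi)\le1}\mathrm{osc}(P_{n+1}\cdots P_{n+g}\psi)$ tends to $0$ as $g\to\infty$. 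I would record that $\beta$ is nonincreasing (each $P_n$ is an averaging operator, hence does not increase oscillation), and that by uniform approximation on the compact space $X$ it suffices to treat Lipschitz observables $\varphi$, the error being at most $n\|\varphi-\tilde\varphi\|_\infty$ and thus absorbable into $\eps$.

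Next I would pass to the centered sums $V_n=\sum_{k=0}^{n-1}\big(\varphi(Y_k)-\E\varphi(Y_k)\big)$. Replacing $\E\varphi(Y_k)$ by $\int_X\varphi\dd\msp_k$ alters the centering by a deterministic quantity bounded by $\mathrm{Lip}(\varphi)\sum_{k<n}\beta(k)$; this is $o(n)$ by Ces\`aro and is harmless for the large-deviation bound (for the finitely many $n$ where it is not yet below $\eps/2$ one enlarges $C$). The core difficulty is that $V_n$ is not a martingale and that the Standing Assumption provides no summable, let alone geometric, mixing rate — only $\beta(g)\to0$. The device I would use to circumvent this is a decomposition by residue classes modulo a fixed lag $p$: choosing $p$ with $\beta(p)<\eps/2$ and splitting $V_n=\sum_{r=0}^{p-1}V_n^{(r)}$ with $V_n^{(r)}=\sum_{k\equiv r\,(p)}(\varphi(Y_k)-\E\varphi(Y_k))$, convexity gives $\p(|V_n|>n\eps)\le\sum_{r}\p(|V_n^{(r)}|>n\eps/p)$.

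Within one residue class the successive summands are $p$ steps apart, and I would write each as a martingale difference plus a small drift: conditioning the $(i{+}1)$-st summand on $\mathcal F_{r+ip}=\sigma(f_1,\dots,f_{r+ip})$ produces $(P_{r+ip+1}\cdots P_{r+(i+1)p}\varphi)(Y_{r+ip})$ minus its mean, a function of oscillation at most $\beta(p)<\eps/2$. Hence $V_n^{(r)}$ equals a sum of about $n/p$ bounded martingale differences (each increment $\le 2\|\varphi\|_\infty+\beta(p)$) plus a drift of deterministic magnitude at most $(n/p)\beta(p)<n\eps/(2p)$. The drift absorbs half of the threshold, and Azuma--Hoeffding applied to the martingale part yields $\p(|V_n^{(r)}|>n\eps/p)\le 2\exp\!\big(-c\,n\eps^2/p\big)$ for an explicit $c=c(\|\varphi\|_\infty)$. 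Summing the $p$ classes gives \eqref{nonstLD} with $C=2p$ and $\delta=c\eps^2/p$, and \eqref{nonstErg} then follows from \eqref{nonstLD} via Borel--Cantelli applied along $\eps=1/q$. I expect the main obstacle to be exactly the weakness of the hypothesis: since loss of memory is purely qualitative ($\beta(g)\to0$ with no rate), the naive martingale approximation through the corrector $\sum_{k\ge j}P_{j+1}\cdots P_k\varphi$ breaks down (its oscillation need not be finite), and it is the residue-class blocking together with the drift bookkeeping that lets a rate-free mixing assumption still deliver an exponential estimate.
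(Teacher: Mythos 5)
Your proposal cannot be compared against ``the paper's own proof'' for a simple reason: the paper does not prove this statement at all. Theorem \ref{thm:nonstErg} is imported verbatim from Gorodetski--Kleptsyn \cite{GK} and used as a black box; the paper's actual contribution is the verification of the \textbf{Standing Assumption} for strictly aperiodic random walks (Theorem \ref{thm:mainRes}). Judged as an independent proof of the cited result, your argument is correct. The operator reformulation is legitimate: Kantorovich--Rubinstein duality on the compact space $X$, tested on Dirac initial measures, gives $\beta(m)\le\delta$ whenever the Standing Assumption holds with parameters $(\delta,m)$, and the monotonicity of your $\beta$ (each $P_n$ averages, hence does not increase oscillation) upgrades this to $\beta(g)\to 0$. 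The recentering from $\E\,\varphi(Y_k)$ (start at $\delta_x$) to $\int_X\varphi\,\dd\msp_k$ (start at $\msp_0$) costs at most $\mathrm{Lip}(\varphi)\sum_{k<n}\beta(k)=o(n)$, deterministically and uniformly in $x$, so it is indeed absorbable into $\eps$ after enlarging $C$. The core blocking step is sound: with $\mathrm{Lip}(\varphi)\,\beta(p)<\eps/2$, each residue class mod $p$ splits into a martingale with increments bounded by $2\|\varphi\|_\infty$ (with respect to the filtration $\mathcal{F}_{r+ip}$, the first term conditioned on the trivial $\sigma$-algebra) plus a surely bounded drift of size at most $(n/p+1)\,\eps/2$; Azuma--Hoeffding and the union bound over the $p$ classes give \eqref{nonstLD}, and Borel--Cantelli along $\eps=1/q$ gives \eqref{nonstErg}. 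The only slips are cosmetic: the Lipschitz-approximation error is $2n\|\varphi-\tilde\varphi\|_\infty$ rather than $n\|\varphi-\tilde\varphi\|_\infty$; the oscillation of $P_{r+ip+1}\cdots P_{r+(i+1)p}\varphi$ is bounded by $\mathrm{Lip}(\varphi)\,\beta(p)$, not $\beta(p)$; and the block counts are $n/p+O(1)$ --- none of which matters since $C,\delta$ may depend on $\varphi$ and $\eps$. Your route (loss of memory recast as oscillation decay, arithmetic-progression blocking, martingale-plus-drift decomposition, exponential concentration) is essentially the mechanism behind the argument in \cite{GK} itself, so you have reconstructed the cited proof rather than found a genuinely different one; the useful point your write-up makes explicit is that no mixing \emph{rate} is needed, because the lag $p$ is frozen once $\eps$ is fixed.
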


\emr{Choosing $X = \A$ we see that to prove Theorem \ref{thm:mainGlobal} it is enough to verify the \textbf{Standing Assumption} for the sequence $\mgr_1, \mgr_2, \mgr_3, \ldots$, which brings us to the next}

\begin{thm} \label{thm:mainRes}
    Consider a compact (in weak-* topology) $K \subset \mM$, such that any $\mgr \in K$ is \textbf{strictly aperiodic}. Then for any $\eps > 0$ there exists $m \in \N$, such that for any sequence of measures $\mgr_1, \mgr_2, \ldots, \mgr_m \in K$ and any starting measure $\msp \in \mM$ the following inequality holds:
    \begin{equation} \label{mainIneq}
        W(\mgr_{m} * \mgr_{m - 1} * \ldots * \mgr_{1} * \msp, h) < \eps.
    \end{equation}
\end{thm}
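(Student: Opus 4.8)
My plan is to pass to the Fourier (character) side, where convolution becomes multiplication, and then convert control of finitely many Fourier coefficients back into a Wasserstein estimate. Let $\hat\A$ denote the dual group of continuous characters $\chi \colon \A \to \T$, and for $\mgr \in \mM$ write $\hat\mgr(\chi) = \int_\A \chi \dd\mgr$. Three elementary facts drive the argument: (i) $\widehat{\mgr * \msp}(\chi) = \hat\mgr(\chi)\,\hat\msp(\chi)$, so for $\sigma := \mgr_m * \cdots * \mgr_1 * \msp$ one has $\hat\sigma(\chi) = \hat\msp(\chi)\prod_{j=1}^m \hat\mgr_j(\chi)$; (ii) $\hat h(\chi) = 0$ for every nontrivial $\chi$ (by invariance of $h$), while $\hat\sigma(\chi_0) = \hat h(\chi_0) = 1$ for the trivial character $\chi_0$; and (iii) $|\hat\mgr(\chi)| \le 1$ always. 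Consequently $|\hat\sigma(\chi)| \le \prod_{j=1}^m |\hat\mgr_j(\chi)|$ for every nontrivial $\chi$.

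The key quantitative input is a uniform spectral gap: for each nontrivial $\chi$ I claim $q(\chi) := \sup_{\mgr \in K} |\hat\mgr(\chi)| < 1$. Indeed $\mgr \mapsto \hat\mgr(\chi)$ is weak-* continuous (as $\chi$ is continuous), so $|\hat\mgr(\chi)|$ attains its maximum on the weak-* compact set $K$ at some $\mgr^*$. If that maximum equaled $1$, then, since $|\chi| \equiv 1$, the character $\chi$ would be constant on $\supp(\mgr^*)$; hence $\alpha - \beta \in \ker\chi$ for all $\alpha, \beta \in \supp(\mgr^*)$, placing the entire semigroup generated by these differences inside the proper closed subgroup $\ker\chi$, and contradicting strict aperiodicity of $\mgr^*$. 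Therefore $q(\chi) < 1$.

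The step I expect to be the main obstacle is the reduction of the metric estimate (\ref{mainIneq}) to control of finitely many characters, uniformly in $\sigma$. Here I would use that the Wasserstein ball $\{\sigma : W(\sigma, h) < \eps\}$ is a weak-* open neighborhood of $h$, so it contains a basic neighborhood cut out by finitely many continuous test functions; approximating these functions uniformly by trigonometric polynomials via Stone–Weierstrass (the characters form a conjugation-closed, point-separating subalgebra of $C(\A, \mathbb{C})$ containing the constants) yields a finite set $F \subset \hat\A$ of nontrivial characters and a $\delta > 0$, depending only on $\eps$, such that $|\hat\sigma(\chi)| < \delta$ for all $\chi \in F$ already forces $W(\sigma, h) < \eps$.

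Finally I would combine the pieces. Set $q := \max_{\chi \in F} q(\chi) < 1$ and choose $m$ so large that $q^m < \delta$. Then for any $\mgr_1, \ldots, \mgr_m \in K$, any $\msp \in \mM$, and every $\chi \in F$ one gets $|\hat\sigma(\chi)| \le \prod_{j=1}^m |\hat\mgr_j(\chi)| \le q(\chi)^m \le q^m < \delta$, whence $W(\sigma, h) < \eps$, which is exactly (\ref{mainIneq}). The uniformity in the choice of $\mgr_1, \dots, \mgr_m$ and in $\msp$ is automatic, since $m$, $F$, $\delta$, and $q$ were all chosen depending only on $\eps$ and $K$.
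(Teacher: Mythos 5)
Your proof is correct, and it takes a genuinely different route from the paper's. The paper works entirely on the measure side: it first shows that for some $m$ the support of $\mgr_m * \cdots * \mgr_1$ is $\eps$-dense uniformly over $K$ (using weak-* compactness, a pigeonhole argument on Wasserstein-close measures, and a rearrangement of the convolutions that exploits commutativity), upgrades this by another compactness argument to a uniform lower bound $\delta$ on the mass of every $\eps$-wide set (Proposition \ref{prop:right_form}), and then runs an iterative decomposition: from each block of $m$ steps it peels off a piece of mass proportional to $\delta$ whose normalization is $\eps$-close to $h$, uses the fact that convolution does not increase $W(\cdot, h)$ (Lemma \ref{lm:mon}) to preserve the pieces already produced, and lets the unmatched mass decay like $(1-\delta)^r$, finishing with convexity of $W$. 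You instead work on the Fourier side: the product formula $\widehat{\sigma}(\chi) = \widehat{\msp}(\chi)\prod_j \widehat{\mgr}_j(\chi)$, a uniform spectral gap $q(\chi) = \sup_{\mgr \in K}|\widehat{\mgr}(\chi)| < 1$ obtained from weak-* compactness of $K$ plus strict aperiodicity (equality $|\widehat{\mgr}(\chi)|=1$ would trap the difference semigroup of $\supp(\mgr)$ inside the proper closed subgroup $\ker\chi$), and a Stone--Weierstrass reduction of the Wasserstein estimate to finitely many characters. This is noteworthy because the paper explicitly states, at the end of Section 1, that Kawada and It\^o proved the stationary result by representation-theoretic methods and that the author does not know whether those methods can be pushed to the uniform nonstationary statement; your argument shows that they can, the only new ingredients being the compactness of $K$ (which makes the spectral gap uniform, character by character) and the trivial remark that a nonstationary product of numbers of modulus at most $q<1$ still decays geometrically. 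As for what each approach buys: yours is shorter and gives quantitative geometric decay of the finitely many relevant Fourier coefficients, hence an explicit $m$ once $F$ and $\delta$ are fixed, but it consumes Peter--Weyl/Pontryagin duality (characters of a compact abelian group separate points) as an external input; the paper's argument is elementary and self-contained, avoiding all structure theory, and its decomposition technique is the sort one could hope to adapt to settings where a useful character theory is unavailable.
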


\begin{rem} \label{rem:statWeakConv}
    A classical theorem by Stromberg \cite{S} states that \emr{in a stationary case} a weak-* limit $\lim_{n \to \infty} \mgr^{*n}$ exists if and only if $\mgr$ is \textbf{strictly aperiodic}. Moreover, said limit is always equal to the Haar measure $h$. It can be easily seen from Stromberg's result that Theorem \ref{thm:mainRes} will not hold if there is a not \textbf{strictly aperiodic} $\mgr \in K$. In fact, a simple counterexample can be constructed for $\A = \R / \Z$: consider $\mgr = \delta_{\alpha}$ for some irrational $\alpha$. Despite the fact that $\bigcup_{n \in \N} \supp(\mgr^{*n})$ is dense in $\R / \Z$, there is still no weak-* limit for $\mgr^{*n}$.

    The compactness of $K$ is also essential. For the same $\A = \R / \Z$ fix some irrational $\alpha \in (0, 1)$ and define
    \begin{equation*}
        \mgr_{n} = \frac{1}{2} \delta_{0} + \frac{1}{2} \delta_{\alpha/2^n}.
    \end{equation*}
    Even though every $\mgr_n$ is \textbf{strictly aperiodic} we have
    \begin{equation*}
        \supp (\mgr_n * \mgr_{n - 1} * \ldots * \mgr_{1}) \subset [0, \alpha],
    \end{equation*}
    hence the weak-* limit of $\mgr_n * \mgr_{n - 1} * \ldots * \mgr_{1}$ cannot be equal to $h = \Leb|_{[0, 1)}$.
\end{rem}

Postponing the proof of Theorem \ref{thm:mainRes} until the next section we would like to deduce Theorem \ref{thm:mainGlobal}:

\begin{proof} [Proof of Theorem \ref{thm:mainGlobal}]
    It follows from inequality (\ref{mainIneq}) that for any sequence \\
    $\mgr_1, \mgr_2, \mgr_3 \ldots \in K$ the \textbf{Standing Assumption} holds. Applying Theorem \ref{thm:nonstErg} we obtain that statements (\ref{nonstErg}) and (\ref{nonstLD}) hold. Since convergence in Wasserstein metric is equivalent to weak-* convergence inequality (\ref{mainIneq}) also implies that
    \begin{equation*}
        \frac{1}{n} \sum_{k = 0}^{n - 1} \int_{\A} \varphi \dd \msp_k \to \int_{\A} \varphi \dd h.
    \end{equation*}
    After that formulae (\ref{isomErg}) and (\ref{isomLD}) follow directly from (\ref{nonstErg}) and (\ref{nonstLD}).
\end{proof}

\section{Proof of Theorem \ref{thm:mainRes}: weak-* convergence to the Haar measure}

We will denote by $B(x, r)$ an open ball in metric $d$ of radius $r$ centered at $x$. We would like to start with a following
\begin{defn} \label{def:epsWide}
    We say that a set $Q \in \mathcal{B}$ is $\eps$-wide if it is contained in a ball of radius $\eps$ and contains a ball of radius $\eps/3$.
\end{defn}

\begin{rem}
    Note that it follows directly from Vitali covering lemma \cite{Vit} that any compact metric space can be partitioned into finitely many $\eps$-wide sets for any fixed $\eps > 0$.
\end{rem}

Let us postpone the proof of the following technical statement until Section \ref{sec:Tec}:

\begin{prop} \label{prop:right_form}
    \emr{Let $K$ be a compact set in $\mM$, such that any $\mgr \in K$ is {\bf strictly aperiodic}.} For any $\eps > 0$ there exists $m \in \N$ and $\delta > 0$, such that for any $\mgr_1, \mgr_2, \ldots, \mgr_m \in K$, any $\msp \in \mM$ and any $\eps$-wide set $Q$ we have
    \begin{equation} \label{mixingCond}
        [\mgr_{m} * \mgr_{m - 1} * \ldots * \mgr_{1} * \msp] (Q) \ge \delta.
    \end{equation}
\end{prop}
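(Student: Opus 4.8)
The plan is to reduce the statement to a uniform lower bound on the mass that $\mgr_m * \cdots * \mgr_1$ assigns to balls of radius $\eps/3$, and then to produce such a bound by combining a support-spreading argument with a compactness argument on the space of convolutions. Throughout, call a set $A \subset \A$ \emph{$r$-dense} if every point of $\A$ lies within distance $r$ of $A$. First I would eliminate the starting measure $\msp$. Writing $\rho = \mgr_m * \cdots * \mgr_1$, formula (\ref{convAbelian}) gives $(\rho * \msp)(Q) = \int_\A \rho(Q - \alpha)\, \dd\msp(\alpha)$. Since $Q$ is $\eps$-wide it contains some ball $B(y, \eps/3)$, and shift-invariance of $d$ yields $Q - \alpha \supseteq B(y - \alpha, \eps/3)$. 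Hence it suffices to find $m$ and $\delta > 0$ such that $\rho(B(z, \eps/3)) \ge \delta$ for every $z \in \A$ and every choice $\mgr_1, \ldots, \mgr_m \in K$; integrating this bound against $\msp$ then gives (\ref{mixingCond}).

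Second, I would establish uniform spreading of supports: for every $r > 0$ there is $m_0$ such that for all $m \ge m_0$ and all $\mgr_1, \ldots, \mgr_m \in K$ the set $\overline{\supp\mgr_1 + \cdots + \supp\mgr_m}$ is $r$-dense. For a single strictly aperiodic $\mgr$ with $S = \supp\mgr$ and a fixed $s_0 \in S$, the set $T = S - s_0$ contains $0$, so the sumsets $T \subseteq T+T \subseteq \cdots$ increase to the semigroup they generate. Its closure is a closed subsemigroup of the compact group $\A$, hence a subgroup, and it contains $T - T = S - S$, so by strict aperiodicity it is all of $\A$; by compactness the increasing sumsets are $r$-dense at a finite stage, and since $\supp(\mgr^{*n})$ is a translate of the $n$-fold sumset of $T$, it too becomes $r$-dense. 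To make this uniform over $K$ I would use lower semicontinuity of the support in the weak-* topology (if $\msp_k \to \msp$ then $\liminf_k \msp_k(U) \ge \msp(U)$ for open $U$, so $\supp\msp_k$ eventually meets every open set charged by $\msp$): approximating finitely many points of a dense sumset for $\mgr$ by support points of nearby measures, each $\mgr \in K$ admits a weak-* neighborhood $U_\mgr$ and an integer $n_\mgr$ such that any $n_\mgr$ measures drawn from $U_\mgr$ already have an $r$-dense sumset of supports. A finite subcover of $K$, a pigeonhole argument (some neighborhood is used at least $n_\mgr$ times), and the observation that enlarging a sumset by any nonempty set only translates an already $r$-dense piece, then yield a single $m_0$.

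Third, I would pass from dense supports to a uniform mass bound. Fix a finite $(\eps/6)$-net $w_1, \ldots, w_L$ of $\A$ and take $m = m_0$ for $r = \eps/6$. The convolution map $(\mgr_1, \ldots, \mgr_m) \mapsto \mgr_m * \cdots * \mgr_1$ is weak-* continuous, so the set $P$ of all such products is compact. Consider $\Phi(\rho) = \min_{1 \le l \le L} \rho(B(w_l, \eps/6))$. By the previous step every $\rho \in P$ has $(\eps/6)$-dense support, so each $B(w_l, \eps/6)$ contains a support point and $\Phi(\rho) > 0$; moreover each $\rho \mapsto \rho(B(w_l, \eps/6))$ is lower semicontinuous (open sets, Portmanteau), hence so is their minimum $\Phi$. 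A lower semicontinuous function attains its minimum on the compact set $P$, so $\delta := \min_{\rho \in P} \Phi(\rho) > 0$. Finally, for any $z \in \A$ choose $w_l$ with $d(z, w_l) < \eps/6$; then $B(w_l, \eps/6) \subseteq B(z, \eps/3)$, whence $\rho(B(z, \eps/3)) \ge \rho(B(w_l, \eps/6)) \ge \Phi(\rho) \ge \delta$, which is exactly the bound required in the first step.

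I expect the third step to be the main obstacle: dense support alone gives only positive, not uniformly positive, mass on balls, and a single measure may place arbitrarily little mass near parts of its support. It is the compactness of $P$ (inherited from that of $K$) together with lower semicontinuity of $\rho \mapsto \rho(U)$ that upgrades ``positive'' to ``uniformly bounded below.'' This, along with the pigeonhole step in the second part, is precisely where the compactness hypothesis on $K$ is indispensable, in agreement with the counterexample of Remark \ref{rem:statWeakConv}.
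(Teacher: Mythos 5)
Your proof is correct, and it shares the paper's two-stage skeleton: first show that the supports of the $m$-fold convolutions $\mgr_m * \cdots * \mgr_1$ are uniformly $\eps$-dense (this is exactly the paper's Proposition \ref{prop:measure_erg}, including the same finite-subcover-plus-pigeonhole use of the compactness of $K$ and of commutativity), and then use compactness once more to upgrade dense supports to a uniform mass bound. The implementations of both stages, however, are genuinely different. For the single-measure density statement (the paper's Lemma \ref{lm:OneMesDenseSupp}) you invoke the structure fact that a closed subsemigroup of a compact group is a subgroup, whereas the paper gives an elementary explicit construction (inserting ``bogus terms'' $\alpha - \alpha$ and translating by the sum of the $\beta_j^{(k)}$); your route is shorter but outsources the work to a standard lemma that itself requires proof. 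For the uniformization over $K$ you use Portmanteau: the sets $\{\msp : \msp(U) > 0\}$, $U$ open, are weak-* open, which produces the neighborhoods $U_\mgr$ directly in the topology in which $K$ is compact; the paper instead proves a quantitative Wasserstein statement (Lemma \ref{lm:clSupp}: $W(\mgr,\tilde{\mgr})<\delta$ implies $\supp(\mgr)\subset\supp(\tilde{\mgr})+B(0,\tilde{\eps})$) --- equivalent in effect, since $W$ metrizes weak-* convergence. Most distinctively, you dispose of the starting measure $\msp$ at the outset via $(\rho*\msp)(Q)=\int_\A \rho(Q-\alpha)\dd\msp(\alpha)$ and then argue directly: the set of products $\rho$ is compact by joint weak-* continuity of convolution, $\rho \mapsto \min_l \rho(B(w_l,\eps/6))$ is lower semicontinuous, and a lower semicontinuous function attains its (positive) minimum on a compact set. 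The paper instead argues by contradiction, extracting convergent subsequences of the measures $\mgr_j^{(n)}$, of the starting measures $\msp^{(n)}$, and of the wide sets $Q_n$; your direct version avoids juggling the sequence of $\eps$-wide sets and the starting measures (the paper needs Remark \ref{rem:dens_pres} for the latter) and makes the role of compactness more transparent, at the cost of stating explicitly the continuity of convolution and the semicontinuity facts, both of which the paper's limiting argument uses implicitly anyway. One constant needs adjusting: with your convention that $r$-dense means within distance $\le r$, an $(\eps/6)$-dense support only guarantees support points in \emph{closed} balls of radius $\eps/6$ around the net points $w_l$, so an open ball $B(w_l,\eps/6)$ could in principle carry zero mass; running your second step with $r = \eps/12$ removes the issue.
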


Before proving Theorem \ref{thm:mainRes} let us establish the following
\begin{lm} \label{lm:mon}
    For any $\mgr, \msp \in \mM$ one has
    \begin{equation} \label{shift_mon}
        W(\mgr * \msp, h) \le W(\msp, h).
    \end{equation}
\end{lm}

\begin{proof}[Proof of Lemma \ref{lm:mon}]
    Let $\gamma$ be a probability measure on $\A \times \A$, such that $P_x(\gamma) = \msp$ and $P_y(\gamma) = h$. Define a measure
    \emr{\begin{equation*}
        \dd \tilde{\gamma} (x, y) = \int_{\A} \dd\gamma(x - \alpha, y - \alpha) \dd\mgr(\alpha).
    \end{equation*}
    It is clear that $P_x(\tilde{\gamma}) = \mgr * \msp$ and $P_y(\tilde{\gamma}) = h$. Now
    \begin{multline*}
        \iint_{\A \times \A} d(x, y) \dd\tilde{\gamma} (x, y) = \\
        = \iiint_{\A \times \A \times \A} d(x, y) \dd\gamma(x - \alpha, y - \alpha) \dd\mgr(\alpha) =\\
        = \iiint_{\A \times \A \times \A} d(x + \alpha, y + \alpha) \dd\gamma(x, y) \dd\mgr(\alpha) =\\
        = \iiint_{\A \times \A \times \A} d(x, y) \dd\gamma(x, y) \dd\mgr(\alpha) = \\
        = \iint_{\A \times \A} d(x, y) \dd\gamma (x, y)
    \end{multline*}}
    and inequality (\ref{shift_mon}) follows.
\end{proof}

\begin{proof}[Proof of Theorem \ref{thm:mainRes}]

    Let us fix $\eps > 0$ and $\msp \in \mM$. We will first show that for $m \in \N$ and $\delta$ satisfying (\ref{mixingCond}) the measure $\msp^{(m)} := \mgr_{m} * \mgr_{m - 1} * \ldots * \mgr_{1} * \msp$ can be decomposed into a sum
    \begin{equation} \label{meas_decomp}
        \msp^{(m)} %= \mgr_{m} * \mgr_{m - 1} * \ldots * \mgr_{1} * \msp
        = \msp_0 + \msp_1,
    \end{equation}
    where $\msp_1(\A) = \delta$ and
    \begin{equation} \label{msp2dist}
        W\left( \frac{\msp_1}{\msp_1(\A)}, h \right) \le \eps.
    \end{equation}
    In order to construct that decomposition we fix a partition of $\A$ into $\eps$-wide sets $\{Q_j\}_{1 \le j \le k}$. Next, define a measure
    \begin{equation*}
        \dd \nu_1 (x) = \sum_{j = 1}^{k} \frac{\delta h(Q_j)}{\msp^{(m)}(Q_j)} \1_{Q_j} (x) \dd \msp^{(m)} (x).
    \end{equation*}
    Notice that according to (\ref{mixingCond}) we have $\frac{\delta h(Q_j)}{\msp^{(m)}(Q_j)} < \frac{\delta}{\msp^{(m)} (Q_j)} \le 1$, so $\msp_0 = \msp^{(m)} - \msp_1$ is a well-defined positive measure and $\msp_1(Q_j) = \delta h(Q_j)$. Clearly, $\msp_1(\A) = \delta$. Now let us show that (\ref{msp2dist}) holds. We would like to construct a probability measure $\gamma$ on $\A \times \A$, such that
    \begin{equation} \label{gammaSupp}
        \supp(\gamma)~\subset~\cup_{j = 1}^{k} Q_j \times Q_j,
    \end{equation}
    $P_x(\gamma) = \frac{\msp_1}{\delta}$, and $P_y(\gamma) = h$. It immediately follows from (\ref{gammaSupp}) that
    \begin{multline*}
        \iint_{\A \times \A} d(x, y) \dd\gamma(x, y) = \sum_{j = 1}^{k} \iint_{Q_j \times Q_j} d(x, y) \dd\gamma(x, y) \le \\
        \le \eps \sum_{j = 1}^{k} \gamma(Q_j \times Q_j) = \eps.
    \end{multline*}
    The construction of required $\gamma$ can be done in the following way: for each $Q_j$ define
    \begin{equation*}
        \gamma|_{Q_j \times Q_j} = \dfrac{\frac{\msp_1|_{Q_j}}{\delta} \otimes h|_{Q_j}}{h(Q_j)}.% = \frac{\frac{\msp_2|_{Q_j}}{p} \otimes h|_{Q_j}}{\msp_2(Q_j)}.
    \end{equation*}
    It's easy to verify that $\gamma$ satisfies required properties and hence (\ref{msp2dist}) holds.

    \emr{Now we can apply the same reasoning to the measure $\msp_0$. Namely, after applying another $m$ iterations of our random walk we will be able to decompose $\mgr_{2m} * \mgr_{2m - 1} * \ldots * \mgr_{m + 1} * \msp_0$ the same way as in (\ref{meas_decomp}) (up to scaling). That will give the following decomposition for $\msp^{(2m)} := \mgr_{2m} * \mgr_{2m - 1} * \ldots * \mgr_{1} * \msp $:
    \begin{equation*}
        \msp^{(2m)} = \msp^{(2)}_0 + \msp^{(2)}_1 + \msp^{(2)}_2,
    \end{equation*}
    where $\msp^{(2)}_{1} = \mgr_{2m} * \mgr_{2m - 1} * \ldots * \mgr_{m + 1} * \msp_1$, $\msp^{(2)}_2(\A) = \delta (1 - \delta)$ and 
    \begin{equation*}
        W \left( \frac{\msp^{(2)}_2}{\msp^{(2)}_2(\A)}, h \right) \le \eps.
    \end{equation*}
    In addition, combining Lemma \ref{lm:mon} and inequality (\ref{msp2dist}) we get 
    \begin{equation*}
        W \left( \frac{\msp^{(2)}_1}{\msp^{(2)}_1(\A)}, h \right) \le \eps.
    \end{equation*}
    Repeating this procedure $r > \log_{1 - \delta} (\eps)$ times we will obtain a decomposition
    \begin{equation*}
        \msp^{(mr)} = \msp^{(r)}_0 + \msp^{(r)}_1 + \ldots + \msp^{(r)}_{r},
    \end{equation*}}
    such that
    \begin{equation} \label{mspjdist}
        W\left( \frac{\msp^{(r)}_j}{\msp^{(r)}_j(\A)}, h \right) = W\left( \frac{\msp^{(r)}_j}{(1 - \delta)^{j - 1} \delta}, h \right) \le \eps \quad \text{for any $1 \le j \le r$}
    \end{equation}
    and $\msp^{(r)}_0(\A) = (1 - \delta)^r < \eps$. Finally, \emr{applying convexity of $W$ stated in Remark \ref{rem:conv}}, we observe that
    \begin{multline*}
        W \left( \sum_{j = 0}^{r} \msp^{(r)}_j, h \right) \le \msp^{(r)}_0(\A) W \left( \frac{\msp^{(r)}_0}{\msp^{(r)}_0(\A)}, h \right) + \sum_{j = 1}^{r} \msp^{(r)}_j(\A) W \left( \frac{\msp^{(r)}_j}{\msp^{(r)}_j(\A)}, h \right) \le \\
        \le \Diam(\A) \eps + \eps = (\Diam(\A) + 1) \eps
    \end{multline*}
    and since $\eps$ was chosen an arbitrary positive constant Theorem \ref{thm:mainRes} is proven.

\end{proof}

\section{Proof of Proposition \ref{prop:right_form}: $\eps$-density of the support} \label{sec:Tec}

We would like to start by proving the following
\begin{lm} \label{lm:OneMesDenseSupp}
    Consider $\mgr \in \mM$ that satisfies the \textbf{strict aperiodicity} condition. Then for any $\eps > 0$ there exists $m \in \N$, such that $\supp(\mgr^{*m})$ is $\eps$-dense in $\A$.
\end{lm}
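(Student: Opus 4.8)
The plan is to work directly with the supports of convolutions and reduce the claim to a purely combinatorial statement about sumsets. Write $S := \supp(\mgr)$ and recall the standard fact that $\supp(\mgr^{*m}) = \overline{S + \cdots + S}$ with $m$ summands, so it suffices to find a single $m$ for which the $m$-fold sumset $mS := S + \cdots + S$ is $\eps$-dense in $\A$. Fix a base point $s_0 \in S$ and set $T := S - s_0$, so that $0 \in T$ and $mS = m s_0 + mT$. Since $d$ is shift-invariant, translation by $m s_0$ is an isometry, and hence $mS$ is $\eps$-dense if and only if $mT$ is; this lets me work with $T$ instead of $S$. The payoff of passing to $T$ is that $0 \in T$ forces the sumsets to be nested, $mT \subseteq (m+1)T$, so that $\bigcup_{m \ge 1} mT$ is an increasing sub-semigroup of $\A$ containing $0$.

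The crucial step is to show that $\bigcup_{m} mT$ is dense in $\A$. Let $\Sigma := \overline{\bigcup_{m \ge 1} mT}$. By continuity of addition, $\Sigma$ is a closed sub-semigroup of the compact group $\A$, and I would invoke the (standard, and easily verified) fact that \emph{every closed sub-semigroup of a compact group is automatically a subgroup}: given $a \in \Sigma$, compactness lets one extract indices $n_j$ along which $n_j a$ converges, so the differences $(n_{j'} - n_j) a$ approach $0$; choosing these differences to be at least $1$ (resp. at least $2$) places $0$ (resp. $-a$) in $\overline{\{ka : k \ge 1\}} \subseteq \Sigma$. Granting this, $\Sigma$ is a closed subgroup of $\A$ containing $T$; being a subgroup it also contains $T - T = S - S$, and being closed it contains the closed subgroup generated by $S - S$. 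By \textbf{strict aperiodicity} this closed subgroup is all of $\A$, so $\Sigma = \A$, i.e. $\bigcup_m mT$ is dense.

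Finally I would pass from density of the increasing union to a single index by compactness. For each $m$ set $U_m := \{ x \in \A : d(x, mT) < \eps \}$, the open $\eps$-neighborhood of $mT$. Nestedness of the sumsets gives $U_m \subseteq U_{m+1}$, while density of $\bigcup_m mT$ gives $\bigcup_m U_m = \A$. Compactness of $\A$ yields a finite subcover, and by nestedness the member of largest index already covers everything, so $U_M = \A$ for some $M$. Thus $MT$ is $\eps$-dense, and therefore so are $MS = M s_0 + MT$ and its closure $\supp(\mgr^{*M})$; this proves the lemma with $m = M$.

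The main obstacle is the second paragraph: the identification of the closed sub-semigroup $\Sigma$ with a subgroup is exactly where both the compactness of $\A$ and the algebraic content enter, and it is what bridges the gap between \textbf{strict aperiodicity} (a condition on the \emph{difference} set $S - S$) and the sumsets $mS$ that actually appear in $\supp(\mgr^{*m})$. The remaining steps — the support-of-convolution identity, the isometric translation by $s_0$, and the compactness covering argument — are routine once this fact is in hand.
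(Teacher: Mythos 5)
Your proof is correct, but it takes a genuinely different route from the paper's. The paper's proof is a direct construction: strict aperiodicity yields a finite $\eps$-net of points of the form $\sum_{j=1}^{n_k}\bigl(\alpha_j^{(k)}-\beta_j^{(k)}\bigr)$ with $\alpha_j^{(k)},\beta_j^{(k)}\in\supp(\mgr)$; padding with terms $\alpha-\alpha$ makes all the sums have a common length $m_0$, and then adding the single fixed element $\beta=\sum_{k}\sum_{j}\beta_j^{(k)}$ converts every difference-sum into an honest $m_0N$-fold sum of elements of $\supp(\mgr)$, so a translate of the $\eps$-net lies inside $\supp(\mgr^{*m_0N})$ and shift-invariance of $d$ concludes. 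You instead isolate the algebraic content in the lemma that a closed sub-semigroup of a compact group is a subgroup, applied to $\Sigma=\overline{\bigcup_m mT}$ with $T=\supp(\mgr)-s_0$, so that strict aperiodicity forces $\Sigma=\A$; the uniform exponent $M$ then comes from a nested-open-cover compactness argument rather than from an explicit net. Both proofs must bridge the same gap --- the hypothesis concerns the difference set $S-S$ while the conclusion concerns the plain sumsets $mS$ --- and they do so by different devices: the paper's correcting element $\beta$ versus your base-point normalization combined with the semigroup-to-group fact. What each buys: the paper's argument is self-contained and reads the exponent $m=m_0N$ off the chosen net; yours is softer and more modular (the covering argument and the semigroup lemma are reusable black boxes, and the skeleton would survive weakening of metrizability), at the cost of invoking two standard facts --- the identity $\supp(\mgr^{*m})=\overline{S+\cdots+S}$ and the sub-semigroup lemma --- whose proofs you only sketch; both sketches are sound (the inverse is obtained correctly from $(n_{j'}-n_j-1)a\to -a$), and incidentally in a compact group the closure in the support identity is redundant, since a finite sum of compact sets is compact, hence closed.
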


\begin{proof}

    It follows from \textbf{strict aperiodicity} of $\mgr$ that for a fixed $\eps > 0$ there exists $\emr{m_0} \in \N$, such that the set of points of the form $\sum_{j = 1}^n \alpha_j - \beta_j$ with $\alpha_j, \beta_j \in \supp(\mgr)$ and $n \le \emr{m_0}$ is $\eps$-dense in $\A$. Let us choose a finite number of these sums $\sum_{j = 1}^{n_k} \alpha_j^{(k)} - \beta_j^{(k)}$ for $1 \le k \le N$ that forms a finite $\eps$-net in $\A$. First of all, notice that we can make $n_k =\emr{ m_0}$ for all $1 \le k \le N$ by adding some bogus terms of the form $\alpha - \alpha$ for some $\alpha \in \supp(\mgr)$. Define an angle
    \begin{equation*}
        \beta = \sum_{k = 1}^{N} \sum_{j = 1}^{\emr{m_0}} \beta_{j}^{(k)}.
    \end{equation*}
    Then all the sums of the form $\sum_{j = 1}^{\emr{m_0}} (\alpha_j^{(k)} - \beta_j^{(k)}) + \beta$ are elements of $\supp(\mgr^{*\emr{m_0}N})$. Hence $\supp(\mgr^{*\emr{m_0}N}) + \beta$ is $\eps$-dense in $\A$ and so is $\supp(\mgr^{*\emr{m_0}N})$, \emr{hence $m = m_0 N$ satisfies the required statement}.

\end{proof}

\begin{rem} \label{rem:dens_pres}
    Notice that if $\supp(\mgr)$ is $\eps$-dense then so is $\supp(\tilde{\mgr} * \mgr) = \supp(\mgr * \tilde{\mgr})$ for any $\mgr, \tilde{\mgr} \in \mM$.
\end{rem}

Now, consider a compact set $K \subset \mM$ such that any measure $\mgr \in K$ has the \textbf{strict aperiodicity} property and let us prove the key property of such compact set:
\begin{prop} \label{prop:measure_erg}
    For any $\eps > 0$ there exists $m$, such that for any sequence\\ ${\mgr_1, \mgr_2, \ldots, \mgr_m \in K}$ the support $\supp(\mgr_m * \mgr_{m - 1} * \ldots * \mgr_{1})$ is $\eps$-dense in $\A$.
\end{prop}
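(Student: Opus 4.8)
The plan is to combine Lemma~\ref{lm:OneMesDenseSupp}, which handles a single measure iterated with itself, with a compactness argument over $K$ to obtain a uniform $m$ that works simultaneously for all sequences in $K$. First I would fix $\eps > 0$ and, for each individual measure $\mgr \in K$, invoke Lemma~\ref{lm:OneMesDenseSupp} to produce an integer $m_{\mgr}$ such that $\supp(\mgr^{*m_{\mgr}})$ is $(\eps/2)$-dense in $\A$. The difficulty is that $m_{\mgr}$ depends on $\mgr$ and a priori could be unbounded over $K$, so the heart of the argument is to trade pointwise density for an open condition that can be covered by finitely many sets.

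The key step is to show that $(\eps/2)$-density of a convolution support is an \emph{open} property in the weak-* topology of the factors. Concretely, I would argue that if $\supp(\mgr^{*m_{\mgr}})$ is $(\eps/2)$-dense, then there is a weak-* neighborhood $U_{\mgr}$ of $\mgr$ such that for any $\tilde\mgr_1, \ldots, \tilde\mgr_{m_{\mgr}} \in U_{\mgr}$ the support $\supp(\tilde\mgr_{m_{\mgr}} * \cdots * \tilde\mgr_1)$ is $\eps$-dense. This relies on the fact that $\supp$ behaves semicontinuously: picking finitely many points $x_1, \ldots, x_N$ in the respective supports whose sum-differences form a finite $\eps/2$-net, each such point lies in the support of the relevant measure, and nearby measures (in weak-* topology) still assign positive mass to small balls around these points, so products of nearby measures still contain points within $\eps/2$ of the original net points. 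Combining with the triangle inequality upgrades $\eps/2$-density to $\eps$-density for the perturbed product. Here I would lean on Remark~\ref{rem:dens_pres}, which guarantees that once a partial product has $\eps$-dense support, further convolution on either side preserves $\eps$-density, so the length of the product need not match $m_{\mgr}$ exactly.

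With the open cover $\{U_{\mgr}\}_{\mgr \in K}$ in hand, compactness of $K$ yields a finite subcover $U_{\mgr^{(1)}}, \ldots, U_{\mgr^{(p)}}$. I would then set $m := m_{\mgr^{(1)}} \cdot m_{\mgr^{(2)}} \cdots m_{\mgr^{(p)}}$, or more simply $m := \max_i m_{\mgr^{(i)}}$ combined with the density-preservation remark. Given an arbitrary sequence $\mgr_1, \ldots, \mgr_m \in K$, each $\mgr_i$ lies in some $U_{\mgr^{(\ell)}}$; the trouble is that consecutive measures may fall into different neighborhoods, so I cannot directly apply the single-neighborhood statement to a block. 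To fix this, I would use a pigeonhole or partitioning argument: among the first $p \cdot \max_i m_{\mgr^{(i)}}$ terms, some neighborhood $U_{\mgr^{(\ell)}}$ must contain at least $m_{\mgr^{(\ell)}}$ of the factors, but because convolution is not commutative in ordering I instead argue more carefully that any sufficiently long product contains, as a consecutive sub-block after regrouping, enough consistent factors — and then invoke Remark~\ref{rem:dens_pres} to absorb the remaining factors on both sides.

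The main obstacle I anticipate is exactly this last matching step: the open-neighborhood statement naturally produces $\eps$-density only for products all of whose factors lie in a \emph{single} $U_{\mgr}$, whereas an arbitrary sequence in $K$ mixes neighborhoods freely. The cleanest resolution, which I would pursue, is to strengthen the open condition so that a \emph{single} factor drawn from $U_{\mgr}$ already suffices once we have accumulated enough steps — that is, to show $\supp(\mgr^{*m_{\mgr}})$ being $(\eps/2)$-dense implies, via lower semicontinuity of the support map under convolution, that \emph{any} product of $m_{\mgr}$ measures each close to $\mgr$ is $\eps$-dense, and then bound the total length $m$ by the product of the finitely many $m_{\mgr^{(i)}}$ so that every sequence of length $m$ is forced to contain a consistent block. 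I expect the verification that the support map is lower semicontinuous enough to make $\eps$-density an open condition to be the most technical part of the proof.
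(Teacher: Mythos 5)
The first two-thirds of your plan --- lower semicontinuity of the support map in the weak-* (Wasserstein) topology, the resulting open condition that any product of $m_{\mgr}$ measures drawn from a small enough neighborhood $U_{\mgr}$ of $\mgr$ has $\eps$-dense support, and the finite subcover of $K$ --- is sound and essentially reproduces the paper's Lemma \ref{lm:clSupp} and Lemma \ref{lm:delta_erg}. The gap is in your final matching step, and it stems from a factual error: you assert that convolution ``is not commutative in ordering.'' On an abelian group it \emph{is} commutative: since $\alpha + \beta = \beta + \alpha$, Fubini applied to (\ref{convAbelian}) gives $\mgr * \msp = \msp * \mgr$ for all $\mgr, \msp \in \mM$, so the factors of $\mgr_m * \cdots * \mgr_1$ may be permuted arbitrarily without changing the product. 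This is exactly the ingredient the paper uses (and the reason the whole paper is confined to abelian groups): pigeonhole a sequence of length $m = n m_0$, where $n$ is the number of $\delta/2$-balls covering $K$, to find $m_0$ factors lying in a common ball; \emph{rearrange} the product so that these factors form an initial block; apply the single-neighborhood statement to that block; and absorb all remaining factors via Remark \ref{rem:dens_pres}.

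Without rearrangement, neither of your proposed workarounds succeeds. Pigeonhole produces many factors in some $U_{\mgr^{(\ell)}}$ but never consecutive ones: a sequence alternating between two far-apart neighborhoods, $\mgr^{(1)}, \mgr^{(2)}, \mgr^{(1)}, \mgr^{(2)}, \ldots$, contains no consecutive block of length $\ge 2$ from a single neighborhood, no matter how long it is; hence ``every sequence of length $m$ is forced to contain a consistent block'' is false for every choice of $m$, whether you take the maximum or the product of the $m_{\mgr^{(i)}}$. Your proposed strengthening --- that a single factor from $U_{\mgr}$ suffices once enough total steps have accumulated --- cannot be obtained from semicontinuity either: semicontinuity compares $\tilde{\mgr}_m * \cdots * \tilde{\mgr}_1$ with $\mgr^{*m}$ only when \emph{every} factor is close to $\mgr$, whereas in the strengthened statement the other factors are arbitrary elements of $K$; a condition quantified over arbitrary sequences in $K$ is essentially Proposition \ref{prop:measure_erg} itself, so this route is circular. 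The one-line repair of your proof is to note $\mgr * \msp = \msp * \mgr$; with that, your pigeonhole argument becomes the paper's proof.
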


%\newpage
We will start by proving the following
\begin{lm} \label{lm:delta_erg}
    For any $\eps > 0$ and any measure $\mgr \in K$ there exist $\delta > 0$ and $m \in \N$, such that for any sequence of measures $\mgr_1, \mgr_2, \ldots, \mgr_m \in K$, such that $W(\mgr, \mgr_j) < \delta$ the support $\supp(\mgr_m * \mgr_{m - 1} * \ldots * \mgr_{1})$ is $\eps$-dense in $\A$.
\end{lm}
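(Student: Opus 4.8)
The plan is to reduce everything to the single measure $\mgr^{*m_0}$, whose support is already $\eps$-dense by Lemma \ref{lm:OneMesDenseSupp}, and then to transfer this density to the perturbed convolution $\mgr_m * \mgr_{m-1} * \ldots * \mgr_1$ by a continuity argument in the Wasserstein metric. The starting observation is that $\eps$-density is equivalent to a positive-mass condition on a fixed finite family of balls: by compactness of $\A$ fix an $(\eps/3)$-net $x_1, \ldots, x_L$ (so every point of $\A$ lies within $\eps/3$ of some $x_i$); if a measure $\tmsp$ satisfies $\tmsp(B(x_i, \eps/3)) > 0$ for all $i$, then for any $y \in \A$ the nearest net point $x_i$ gives $B(x_i, \eps/3) \subset B(y, \eps)$, whence $\tmsp(B(y,\eps)) > 0$ and $\supp(\tmsp)$ is $\eps$-dense.

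First I would apply Lemma \ref{lm:OneMesDenseSupp} to the fixed strictly aperiodic measure $\mgr$ to obtain $m_0$ with $\supp(\mgr^{*m_0})$ $(\eps/12)$-dense, and set $m = m_0$. For each net point I would fix a $(6/\eps)$-Lipschitz bump function $\varphi_i \colon \A \to [0,1]$ equal to $1$ on $B(x_i, \eps/6)$ and vanishing outside $B(x_i, \eps/3)$. Since $\supp(\mgr^{*m_0})$ meets every ball $B(x_i, \eps/6)$, the numbers $c_i := \int_\A \varphi_i \dd \mgr^{*m_0}$ are strictly positive; put $c := \min_i c_i > 0$.

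Next I would record that convolution is nonexpansive in the Wasserstein metric, namely $W(\lambda * \sigma, \lambda * \sigma') \le W(\sigma, \sigma')$ (and likewise on the right), proved by pushing an optimal coupling of $\sigma, \sigma'$ through a common shift $\alpha \sim \lambda$ exactly as in Lemma \ref{lm:mon}, the cost being unchanged by shift-invariance of $d$. A telescoping argument that swaps the factors $\mgr_j$ for $\mgr$ one at a time then gives $W(\mgr_m * \ldots * \mgr_1, \mgr^{*m_0}) \le \sum_{j=1}^{m_0} W(\mgr_j, \mgr) < m_0 \delta$. By Kantorovich--Rubinstein duality, $\bigl| \int \varphi_i \dd(\mgr_m * \ldots * \mgr_1) - c_i \bigr| \le (6/\eps)\, W(\mgr_m * \ldots * \mgr_1, \mgr^{*m_0}) < (6/\eps)\, m_0 \delta$, so choosing $\delta < \eps c/(12 m_0)$ forces $\int \varphi_i \dd(\mgr_m * \ldots * \mgr_1) > c/2 > 0$ for every $i$. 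Hence $(\mgr_m * \ldots * \mgr_1)(B(x_i, \eps/3)) > 0$ for all $i$, and by the first observation $\supp(\mgr_m * \ldots * \mgr_1)$ is $\eps$-dense.

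The main obstacle is conceptual rather than computational: Wasserstein (equivalently weak-*) closeness of $\mgr_j$ to $\mgr$ does \emph{not} control $\supp(\mgr_j)$ pointwise, since a small perturbation can delete support mass (think of $(1-\eta)\delta_0 + \eta\delta_1$ versus $\delta_0$), so one cannot directly claim that $\supp(\mgr_m * \ldots * \mgr_1)$ is Hausdorff-close to $\supp(\mgr^{*m_0})$. The resolution is to phrase $\eps$-density as positivity of the mass on a \emph{fixed finite} collection of balls — a property stable under weak-* perturbation because only finitely many lower semicontinuous functionals are involved — and to use the nonexpansiveness of convolution to convert closeness of the individual factors into closeness of the whole product. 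Note that neither strict aperiodicity nor membership in $K$ of the individual $\mgr_j$ is actually used; only the strict aperiodicity of the fixed $\mgr$ enters, through Lemma \ref{lm:OneMesDenseSupp}.
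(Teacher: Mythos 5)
Your argument is correct, but it takes a genuinely different route from the paper's. The paper argues at the level of supports: its key step is Lemma \ref{lm:clSupp}, proved by a compactness/contradiction argument, which shows that $W(\mgr, \tilde{\mgr}) < \delta$ forces the one-sided inclusion $\supp(\mgr) \subset \supp(\tilde{\mgr}) + B(0, \tilde{\eps})$; applying this to each factor $\mgr_j$ and using that the support of a convolution contains the sum of the supports, it obtains $\supp(\mgr^{*m}) \subset \supp(\mgr_m * \ldots * \mgr_1) + B(0, \eps/2)$, so the $\eps/2$-density of $\supp(\mgr^{*m})$ transfers with a loss of $\eps/2$ (this is why the paper must take the fattening radius $\tilde{\eps} = \eps/(2m)$ shrinking with $m$). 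You instead argue at the level of the measures themselves: you recast $\eps$-density as positivity of finitely many fixed ball masses, bound $W(\mgr_{m_0} * \ldots * \mgr_1, \mgr^{*m_0}) < m_0 \delta$ by telescoping with the nonexpansiveness of convolution (the coupling trick of Lemma \ref{lm:mon}, which the paper uses only later, in the proof of Theorem \ref{thm:mainRes}, not in this lemma), and transfer the positivity via Lipschitz bump functions, needing only the easy direction of Kantorovich--Rubinstein. Your route buys an explicit admissible $\delta$ (in terms of $c$, $m_0$, $\eps$), whereas the paper's $\delta$ comes from a non-constructive contradiction; and your diagnosis of the pitfall --- weak-* closeness does not control supports pointwise, only fattened or integrated quantities --- is exactly the difficulty that the paper's one-sided inclusion in Lemma \ref{lm:clSupp} is designed to circumvent. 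In exchange, the paper's route stays more elementary (no test functions) and yields the stronger geometric conclusion that the perturbed product's support, fattened by $\eps/2$, covers $\supp(\mgr^{*m})$. Both proofs use commutativity only mildly (you in the telescoping swap, the paper in collecting the fattening terms), and both use strict aperiodicity only through Lemma \ref{lm:OneMesDenseSupp} applied to the fixed measure $\mgr$, a point you correctly flag.
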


\begin{proof}[Proof of Lemma \ref{lm:delta_erg}]
    Since $\mgr \in K$ it is \textbf{strictly aperiodic}, we can apply Lemma \ref{lm:OneMesDenseSupp} and conclude that for a fixed $\eps$ there exists $m$, such that $\supp(\mgr^{* m})$ is $\eps/2$-dense in $\A$. We would like to denote $\tilde{\eps} = \eps / (2 m)$ and prove the following technical
    \begin{lm} \label{lm:clSupp}
        There exists $\delta > 0$, such that for any $\tilde{\mgr}$, such that $W(\mgr, \tilde{\mgr}) < \delta$ we have $\supp(\mgr) \subset \supp(\tilde{\mgr}) + B(0, \tilde{\eps})$.
    \end{lm}
    \begin{proof}[Proof of Lemma \ref{lm:clSupp}]
        Assume such $\delta$ does not exist. Then for any positive $\delta > 0$ there is a measure $\tilde{\mgr}_{\delta}$ and a point $x_{\delta}$, such that $x_{\delta} \in \supp(\mgr)$ but $x_{\delta} \notin \supp(\tilde{\mgr}_{\delta}) + B(0, \tilde{\eps})$. \emr{Then $B(x_{\delta}, \tilde{\eps}) \cap \supp(\tilde{\mgr}_{\delta}) = \varnothing$ and hence $\mgr(B(x_{\delta}, \tilde{\eps}/2)) \cdot \tilde{\eps} / 2 \le W(\mgr, \tilde{\mgr})$. The last inequality is true because for any measure $\gamma$ on $\A \times \A$, such that $P_x(\gamma) =\mgr$ and $P_y(\gamma) = \tilde{\mgr}_\delta$ we will have
        \begin{multline*}
            \iint_{\A \times \A} d(x, y) \dd \gamma(x, y) \ge \iint_{B(x_{\delta}, \tilde{\eps}/2) \times (\A \setminus B(x_{\delta}, \tilde{\eps}))} d(x, y) \dd \gamma(x, y) \ge \\
            \ge \frac{\tilde{\eps}}{2} \cdot \mgr({B(x_{\delta}, \tilde{\eps}/2)}) \cdot \tilde{\mgr}_{\delta} (\A \setminus B(x_{\delta}, \tilde{\eps}/2)) = \frac{\tilde{\eps}}{2} \cdot \mgr({B(x_{\delta}, \tilde{\eps}/2)}).
        \end{multline*}}
        Recalling that $W(\mgr, \tilde{\mgr}_{\delta}) < \delta$ we arrive to
        \begin{equation*}
            \mgr(B(x_{\delta}, \tilde{\eps}/2)) < \frac{2 \delta}{\tilde{\eps}}.
        \end{equation*}
        Taking $\delta \to 0$ and choosing a convergent subsequence of $x_{\delta}$ by compactness we find a point ${x_0 \in \supp(\mgr)}$, such that $\mgr(B(x_{0}, \tilde{\eps}/2)) = 0$, which contradicts the definition of $\supp({\mgr})$.
    \end{proof}
    Applying Lemma \ref{lm:clSupp} to every individual measure $\mgr_{j}$ for $1 \le j \le m$ we see that for the same $\delta$ we have
    \begin{equation*}
        \supp(\mgr^{* m}) \subset \supp(\mgr_m * \mgr_{m - 1} * \ldots * \mgr_{1}) + B(0, \eps/2),
    \end{equation*}
    which fact guarantees that $\supp(\mgr_m * \mgr_{m - 1} * \ldots * \mgr_{1})$ is $\eps$-dense in $\A$.
\end{proof}

\begin{proof}[Proof of Proposition \ref{prop:measure_erg}]
    Let us fix some $\eps > 0$. First, since $K$ is compact we can choose such $m_0 \in \N$ and $\delta > 0$ that for any sequence of measures $\mgr_1, \mgr_2, \ldots, \mgr_{m_0} \in K$ with $W(\mgr_j, \mgr_k) < \delta$ the support $\supp(\mgr_{m_0} * \mgr_{m_0 - 1} * \ldots * \mgr_{1})$ will be $\eps$-dense in $\A$. We can also choose a finite covering of $K$ by balls of radius $\delta/2$. Assuming that covering has $n$ balls let us take $m = n m_0$. Then any sequence of $m$ measures will have at least $m_0$ terms $\mgr_{n_1}, \mgr_{n_2}, \ldots, \mgr_{n_{m_{0}}}$ from the same ball of radius $\delta/2$ and hence $W(\mgr_{n_j}, \mgr_{n_k}) < \delta$ will hold. Due to commutativity we can rearrange the measures in a following way:
    \begin{equation*}
        \mgr_{m} * \mgr_{m - 1} * \ldots * \mgr_1 = \ldots * \mgr_{n_{m_0}} * \mgr_{n_{m_0 - 1}} * \ldots * \mgr_{n_1}.
    \end{equation*}
    The support $\supp(\mgr_{n_{m_0}} * \mgr_{n_{m_0 - 1}} * \ldots * \mgr_{n_1})$ is $\eps$-dense because of our choice of $\delta$. Applying Remark \ref{rem:dens_pres} we conclude that the support $\supp(\mgr_{m} * \mgr_{m - 1} * \ldots * \mgr_1)$ is also $\eps$-dense.
\end{proof}

\emr{Using Proposition \ref{prop:measure_erg} we can finally finish the proof of Proposition \ref{prop:right_form}. Assume that it is not true and fix $\eps > 0$ for which the statement fails. Then by Proposition \ref{prop:measure_erg} there exists $m \in \N$, such that for any sequence $\mgr_1, \mgr_2, \ldots, \mgr_m \in K$ the support $\supp(\mgr_m * \mgr_{m - 1} * \ldots * \mgr_{1})$ is $\eps/8$-dense in $\A$. Let us fix that $m$. By our assumption for any $n \in \N$ there exists a sequence $\mgr_{1}^{(n)}, \mgr_{2}^{(n)}, \ldots, \mgr_{m}^{(n)} \in K$, $\msp^{(n)} \in \mM$ and an $\eps$-wide set $Q_n$, such that
\begin{equation*}
    [\mgr_{m}^{(n)} * \mgr_{m - 1}^{(n)} * \ldots * \mgr_{1}^{(n)} * \msp^{(n)} ] (Q_n) < \frac{1}{n}.
\end{equation*}
Due to compactness of $\A$ we can choose an infinite subsequence $Q_{n_k}$, such that all $Q_{n_k}$ cover the same ball $B$ of radius $\eps/4$ (see Definition \ref{def:epsWide}). After that we can choose a convergent subsequence $m + 1$ more times, so that $\mgr_{j}^{(n)} \to_{n \to \infty} \mgr_j$ for $1 \le j \le m$  and $\msp^{(n)} \to_{n \to \infty} \msp$. Now we see that for some $\mgr_1, \mgr_2, \ldots, \mgr_m \in K$ and $\msp \in \mM$ we have
\begin{equation*}
    [\mgr_{m} * \mgr_{m - 1} * \ldots * \mgr_{1} * \msp] (B) = 0,
\end{equation*}
which contradicts the fact that $\supp(\mgr_{m} * \mgr_{m - 1} * \ldots * \mgr_{1})$ (and hence, by Remark \ref{rem:dens_pres} also $\supp(\mgr_{m} * \mgr_{m - 1} * \ldots * \mgr_{1} * \msp)$) is $\eps/8$-dense. That concludes the proof of Proposition \ref{prop:right_form}.}

\section*{Acknowledgements}

The author is grateful to A. Gorodetski and V. Kleptsyn for permanent attention to the work and fruitful discussions, to S. Klein and D. Dolgopyat for providing helpful references, and both anonymous reviewers for their detailed feedback and valuable comments. The author was supported in part by NSF grant DMS--2247966 (PI: A.\,Gorodetski).

\bibliographystyle{amsplain}

\begin{thebibliography}{99}

\selectlanguage{russian}

\bibitem[A]{A} A. de Acosta, Large deviations for Markov chains. Cambridge Tracts in Math., {\bf 229} Cambridge University Press, Cambridge, 2022. xii+249 pp.

\bibitem[AG]{AG} M. Anoussis, D. Gatzouras, A spectral radius formula for the Fourier transform on compact groups and applications to random walks, Adv. Math. {\bf 188} (2004), no.2, pp. 425–443.

\bibitem[BQ]{BQ} Y. Benoist, J.-F. Quint, Random walks on reductive groups. Ergeb. Math. Grenzgeb. (3), 62 [Results in Mathematics and Related Areas. 3rd Series. A Series of Modern Surveys in Mathematics], Springer, Cham, 2016. xi+323 pp.

\bibitem[BB]{BB} D. Berend, V. Bergelson, Ergodic and mixing sequences of transformations. Ergodic Theory Dynam. Systems {\bf 4} (1984), no.3, 353–366.

\bibitem[BE]{BE} A. Berger, S. Evans, A limit theorem for occupation measures of Lévy processes in compact groups, Stoch. Dyn. {\bf 13} (2013), no.1, 1250008, 16 pp.

\bibitem[B]{B} R. Bhattacharya, Speed of convergence of the $n$-fold convolution of a probability measure on a compact group, Z. Wahrscheinlichkeitstheorie und Verw. Gebiete {\bf 25} (1972/1973), pp. 1–10.

\bibitem[Bi]{Bi} G. Birkhoff, A note on topological groups. Compositio Math. {\bf 3} (1936), 427-430.

\bibitem[Bo1]{Bo1} B. Borda, Equidistribution of random walks on compact groups, Ann. Inst. Henri Poincar\'e Probab. Stat. {\bf 57} (2021), no.1, pp. 54–72.

\bibitem[Bo2]{Bo2} B. Borda, Equidistribution of random walks on compact groups II. The Wasserstein metric, Bernoulli {\bf 27} (2021), no.4, pp. 2598–2623.

\bibitem[BL]{BL} \emr{P. Bougerol and J. Lacroix, { Products of Random Matrices with Applications to Schr\"odinger Operators}, Birkhauser, Boston, 1985.}

\bibitem[CDK1]{CDK1} A. Cai, P. Duarte, S. Klein, Mixed random-quasiperiodic cocycles. Bull. Braz. Math. Soc. (N.S.) {\bf 53} (2022), no.4, 1469–1497.

\bibitem[CDK2]{CDK2} A. Cai, P. Duarte, S. Klein, Furstenberg Theory of Mixed Random-Quasiperiodic Cocycles. Commun. Math. Phys. (2023). https://doi.org/10.1007/s00220-023-04726-5

\bibitem[CDK3]{CDK3} A. Cai, P. Duarte, S. Klein, Statistical properties for mixing Markov chains with applications to dynamical systems, arXiv:2210.16908 (2022).

\bibitem[CR]{CR} J.-P., Conze, A. Raugi, Limit theorems for sequential expanding dynamical systems on $[0,1]$. Ergodic theory and related fields, 89–121. Contemp. Math., {\bf 430} American Mathematical Society, Providence, RI, 2007.

\bibitem[DZ]{DZ} \emr{ A. Dembo, O. Zeitouni, Large deviations techniques and applications. Stoch. Model. Appl. Probab., {\bf 38} Springer-Verlag, Berlin, 2010. xvi+396 pp.}

\bibitem[DS]{DS} Z. Dietz, S. Sethuraman, Large deviations for a class of nonhomogeneous Markov chains. Ann. Appl. Probab. {\bf 15} (2005), no.1A, 421–486.

\bibitem[DF]{DF} D. Dolgopyat, B. Fayad, Limit theorems for toral translations. Hyperbolic dynamics, fluctuations and large deviations, 227–277. Proc. Sympos. Pure Math., {\bf 89}, American Mathematical Society, Providence, RI, 2015.

\bibitem[DSa]{DSa} D. Dolgopyat, O. Sarig, Local Limit Theorems for Inhomogeneous Markov Chains (to appear).

\bibitem[DMPS]{DMPS} R. Douc, E. Moulines, P. Priouret, P. Soulier, Markov chains. Springer Ser. Oper. Res. Financ. Eng. Springer, Cham, 2018. xviii+757 pp.

\bibitem[DK]{DK} P. Duarte, S. Klein, Lyapunov exponents of linear cocycles. Continuity via large deviations. Atlantis Stud. Dyn. Syst., {\bf 3} Atlantis Press, Paris, 2016. xiii+263 pp.

\bibitem[Fu]{Fu} \emr{ A.\,Furman,  Random walks on groups and random transformations, {\it Handbook of dynamical systems}, vol. 1A, 931--1014, North-Holland, Amsterdam, 2002.}

\bibitem[G]{G} B. Gidas, Nonstationary Markov chains and convergence of the annealing algorithm. J. Statist. Phys. {\bf 39} (1985), no.1-2, 73–131.

\bibitem[GBK]{GBK} \emr{ P. G\'ora, A. Boyarsky, C. Keefe, Absolutely continuous invariant measures for non-autonomous dynamical systems. J. Math. Anal. Appl. {\bf 470} (2019), no.1, 159–168.}

\bibitem[GK]{GK} \emr{ A. Gorodetski, V. Kleptsyn, Non-stationary version of Ergodic Theorem for random dynamical systems. Mosc. Math. J. {\bf 23} (2023), no.4, 515–532. }

\bibitem[H]{H} D. Hackler, Compact groups of real power need not be metrizable. Proc. Amer. Math. Soc. {\bf 63} (1977), no.1, 187.

\bibitem[Ha]{Ha} P. Halmos, Measure theory, Springer New York, 1950.

\bibitem[K]{K} S. Kakutani, \"Uber die Metrisation der topologischen Gruppen. (German) Proc. Imp. Acad. Tokyo {\bf 12} (1936), no.4, 82–84.

\bibitem[Ka]{Ka} Y. Kawada, On the probability distribution on a compact group. II. Proc. Phys.-Math. Soc. Japan (3) {\bf 23} (1941), 669–686.

\bibitem[KI]{KI} Y. Kawada, K. It\^o, On the probability distribution on a compact group.I., Proc. Phys.-Math. Soc. Japan (3) {\bf 22} (1940), pp. 977–998.

\bibitem[Kl]{Kl} B. Kloss, О вероятностных распределениях на бикомпактных топологических группах (Russian. English summary), Teor. Veroyatnost. i Primenen {\bf 4} (1959), pp. 255–290, (Title translation) Probability Distributions on Bicompact Topological Groups.

\bibitem[L]{L} P. L\'evy, L'addition des variables al\'eatoires d\'efinies sur un circonf\'erence (French), Bull. Soc. Math. France {\bf 67} (1939), pp. 1–41.

\bibitem[LL1]{LL1} G. Liu, W. Liu, On the strong law of large numbers for functionals of countable nonhomogeneous Markov chains. Stochastic Process. Appl. {\bf 50} (1994), no.2, 375–391.

\bibitem[LL2]{LL2} W. Liu, G. Liu, A class of strong laws for functionals of countable nonhomogeneous Markov chains. Statist. Probab. Lett. {\bf 22} (1995), no.2, 87–96.

\bibitem[LY]{LY} W. Liu, W. Yang, An extension of Shannon-McMillan theorem and some limit properties for nonhomogeneous Markov chains. Stochastic Process. Appl. {\bf 61} (1996), no.1, 129–145.

\bibitem[PRU]{PRU} A. Pr\'ekopa, A. R\'enyi, K. Urbanik, { О предельном распределении для сумм независимых случайных величин на бикомпактных коммутативных топологических группах}
(Russian. English summary), Acta Math. Acad. Sci. Hungar. {\bf 7} (1956), pp. 11–16, (Title translation) On the limiting distribution of sums of independent random variables in bicompact commutative topological groups.

\bibitem[RX]{RX} K. Ross, D. Xu, Norm convergence of random walks on compact hypergroups, Math. Z. {\bf 214} (1993), no.3, pp. 415–423.

\bibitem[S]{S} K. Stromberg, Probabilities on a compact group, Trans. Amer. Math. Soc. {\bf 94} (1960), pp. 295–309.

\bibitem[U]{U} K. Urbanik, On the limiting probability distribution on a compact topological group, Fund. Math. {\bf 44} (1957), pp. 253–261.

\bibitem[V]{V} S. Varadhan, Large deviations and applications. CBMS-NSF Regional Conf. Ser. in Appl. Math., {\bf 46} Society for Industrial and Applied Mathematics (SIAM), Philadelphia, PA, 1984. v+75 pp.

\bibitem[Vi]{Vi} C. Villani, Optimal transport. Old and new, Grundlehren Math. Wiss., {\bf 338} [Fundamental Principles of Mathematical Sciences], Springer-Verlag, Berlin, 2009. xxii+973 pp.

\bibitem[Vit]{Vit} G. Vitali, Sui gruppi di punti e sulle funzioni di variabili reali, Atti dell'Accademia delle Scienze di Torino (Italian), {\bf 43} (1908), pp. 75–92, (Title translation) On groups of points and functions of real variables.


\bibitem[Y1]{Y1} { W. Yang, Convergence in the Ces\`aro sense and strong law of large numbers for non\-ho\-mo\-ge\-neous Markov chains. Ninth special issue on linear algebra and statistics, Linear Algebra Appl. {\bf 354} (2002), 275–288.}

\bibitem[Y2]{Y2} W. Yang, Strong law of large numbers for countable nonhomogeneous Markov chains. Linear Algebra Appl. {\bf 430} (2009), no.11-12, 3008–3018.

\bibitem[ZYW]{ZYW} P. Zhang, W. Yang, B. Wang, The strong law of large numbers for multivariate functions of continuous-state nonhomogeneous Markov chains, Dynamic Systems and Applications {\bf 27} (2018), no. 2, pp. 257-269.

\end{thebibliography}

\end{document}